\documentclass{article}
\usepackage{graphicx} 
\usepackage{fullpage}
\usepackage{amssymb, amsmath}
\usepackage{amsthm} 
\usepackage{hyperref}
\usepackage{color}

\usepackage{url}

\usepackage{bm}
\usepackage{enumitem} 

\usepackage{tikz}
\usetikzlibrary{arrows.meta,positioning}

\newtheorem{theorem}{Theorem}
\newtheorem{lemma}[theorem]{Lemma}

\newtheorem{cor}[theorem]{Corollary}
\newtheorem{obs}[theorem]{Observation}
\newtheorem{prob}[theorem]{Problem}

\theoremstyle{definition}

\newtheorem*{theorem*}{Theorem}
\newtheorem*{lemma*}{Lemma}

\newcommand{\rank}{\operatorname{rank}}
\newcommand{\trace}{\operatorname{trace}}

\title{$1$-cross intersecting set pair systems are small}
\author{Dylan King\thanks{Department of Mathematics, California Institute of Technology, Pasadena, CA. E-mail: \texttt{dking@caltech.edu}.} \qquad Van Magnan\thanks{Department of Mathematics and Statistics, University of Vermont, Burlington, VT. E-mail: \texttt{van.magnan@uvm.edu}.} \qquad Cory Palmer\thanks{Department of Mathematical Sciences, University of Montana, Missoula, MT. E-mail: \texttt{cory.palmer@umontana.edu}. Supported by NSF grant DMS-2503179 and a grant from the Simons Foundation [SFI-MPS-TSM-00013277, CP].}}

\begin{document}

\maketitle

\begin{abstract}
    A set pair system $\{(A_i,B_i)\}_{i=1}^m$ is \emph{$1$-cross intersecting} if $|A_i \cap B_i|=0$ for all $i$ and $|A_i \cap B_j| = 1$ whenever $i \neq j$. Let $m(a,b,1)$ denote the maximum size $m$ of a $1$-cross intersecting set pair system $\{(A_i,B_i)\}_{i=1}^m$ where $|A_i| \leq a$ and $|B_i| \leq b$ for all $i$.
    We prove a conjecture of F\"uredi, Gy\'arf\'as, and Kir\'aly [\emph{Combin. Probab. Comput.} \textbf{32} (2023)] that asserts $m(n,n,1)/\binom{2n}{n} \to 0$ as $n \to \infty$.
\end{abstract}

\section{Introduction}

Let $\{(A_i,B_i)\}_{i=1}^m$ be a family of $m\geq 2$ pairs of finite sets, called a \emph{set pair system}. Such a system is \emph{cross intersecting} if $|A_i \cap B_i|=0$ for all~$i$ and $|A_i \cap B_j| \geq 1$ whenever $i\neq j$. A classical result of Bollob\'as~\cite{Bollobas1965} says that if $|A_i| \leq a$ and $|B_i| \leq b$ for all $i$, then $m \leq \binom{a+b}{a}$. Taking~$A_i$ and~$B_i$ to be complementary pairs of sets of size~$a$ and~$b$ inside~$\{1,2,\dots, a+b\}$ shows that this is best possible. 
This result has seen many applications, extensions, and alternative proofs, including the more general YBLM inequality.
See Tuza's two-part survey~\cite{Tuza1994, Tuza1996} for more on the rich history of cross intersecting set pair systems. 

The following variant of cross intersection is due to F\"uredi, Gy\'arf\'as, and Kir\'aly~\cite{FGK2023}. A set pair system $\{(A_i,B_i)\}_{i=1}^m$ is \emph{$1$-cross intersecting} if  $|A_i \cap B_i|=0$ for all~$i$ and $|A_i \cap B_j| = 1$ whenever $i \neq j$.
Let $m(a,b,1)$ denote the maximum size $m$ of a $1$-cross intersecting set pair system $\{(A_i,B_i)\}_{i=1}^m$ where $|A_i| \leq a$ and $|B_i| \leq b$ for all $i$. Focusing for a moment on the diagonal case~$a=b=n$, Bollob\'as's theorem immediately shows that~$m(n,n,1) \leq \binom{2n}{n}$ since every~$1$-cross intersecting family is also cross intersecting. F\"uredi, Gy\'arf\'as, and Kir\'aly~\cite{FGK2023} asked if there is a constant $\varepsilon >0$ such that $m(n,n,1) \leq (1-\varepsilon)\binom{2n}{n}$ for all $n\geq 2$. They also made the stronger conjecture that $m(n,n,1)/\binom{2n}{n} \to 0$ as $n \to \infty$.

The first question was answered by Holzman~\cite{Holzman2021} who proved that, for each~$n \geq 2$,
\[
m(n,n,1) \leq \frac{29}{30} \binom{2n}{n}.
\]
The coefficient in the upper bound was later improved to $\frac{5}{6}$ by Kostochka, McCourt, and Nahvi~\cite{KMN2021}.
The best lower bounds on~$m(n,n,1)$ are due to F\"uredi, Gy\'arf\'as, and Kir\'aly~\cite{FGK2023}:
\[
m(n,n,1) \geq 
\begin{cases}
5^{n/2} & \text{if $n$ even,}  \\
2 \cdot 5^{(n-1)/2} & \text{if $n$ odd}.
\end{cases}
\]
This bound is known to be optimal for $n=2,3$ (the case $n=3$ was verified by computer by Spiro).

We confirm the second conjecture of F\"uredi, Gy\'arf\'as, and Kir\'aly by showing that~$m(n,n,1)/\binom{2n}{n}$ decays exponentially.

\begin{theorem}\label{main-thm}
   Let $\{(A_i,B_i)\}_{i=1}^m$ be a $1$-cross intersecting set pair system with $|A_i| \leq a$ and $|B_i| \leq b$ for every $1 \leq i \leq m$. If $m \geq 2$, then 
   \[
   m \leq e^{\sqrt{ab}}.
   \]
\end{theorem}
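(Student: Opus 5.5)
The plan is to bound $m(a,b,1)$ recursively by a random restriction that shrinks one side of every pair, and then to unroll the recursion. The $1$-cross condition is what should make restriction lossless: a sub-system of a $1$-cross intersecting system stays $1$-cross intersecting when the intersection points are retained. Write $m(a,b)$ for $m(a,b,1)$. The target is an inequality of the shape
\[
m(a,b)\;\le\;\sum_{k\ge 0}\frac{(\sqrt{ab})^{k}}{k!}\;=\;e^{\sqrt{ab}},
\]
or a recursion whose solution is dominated by this series. The base cases are easy. If $a=1$, the sets $A_i=\{x_i\}$ have distinct points, each $x_i$ lies in every $B_j$ with $j\ne i$, and $x_i\notin B_i$, so $m\le b+1\le e^{\sqrt b}$ up to the trivial small cases. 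The case $b=1$ is symmetric.

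\textbf{Restriction step.} Pick a random set $X$ of ground elements, each element included independently with probability $p$. For each $i$ let $E_i$ be the event that $B_i\cap X=\emptyset$ and $|A_i\setminus X|=k$. On the set $I=\{i:E_i \text{ holds}\}$, for $i\ne j$ we have $B_j\cap X=\emptyset$. Hence the unique point of $A_i\cap B_j$ lies in $A_i\setminus X$, so $\{(A_i\setminus X,B_i)\}_{i\in I}$ is again $1$-cross intersecting, now with sets of sizes $k$ and $b$. Taking expectations gives
\[
m(a,b)\,\min_i\Pr(E_i)\le \mathbb E|I|\le m(k,b),
\]
and $\Pr(E_i)=(1-p)^{|B_i|}\binom{|A_i|}{k}(1-p)^kp^{|A_i|-k}$. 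One should check that sets smaller than $a$ and $b$ only help, by padding with dummy points or by monotonicity in $p$. The same step applies with the roles of $A$ and $B$ exchanged. I would then alternate the two reductions, lowering $a$ to $k$ and then $b$ to some $\ell$, choosing $p$ and the target sizes at each stage, until one side reaches size $1$.

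\textbf{Main obstacle.} The hard part is the choice of parameters. As written, the step reproduces only Bollob\'as-type bounds. To make $A_i\setminus X$ small, $p$ must be close to $1$, and then the factor $(1-p)^b$ is exponentially small in $b$. Exploiting $1$-cross intersection must therefore come from a second-order gain: the restricted system has at most one intersection point per pair, so it needs far fewer than $\binom{k+b}{k}$ pairs. What I would try first is a two-sided event. Require $|A_i\setminus X|=k$ and $|B_i\cap X|=\ell$ simultaneously, and restrict to $\{(A_i\setminus X,\;B_i\setminus X)\}$. Because each $A_i\cap B_j$ is a single point, the pairs $(i,j)$ whose point falls on the wrong side can be charged to the $k\ell$ "defect" positions. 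This should give a recursion roughly of the form $m(a,b)\lesssim \frac{\sqrt{ab}}{k}\,m(\cdot,\cdot)$ with $k$ increasing by one each time. Unrolling it would produce the terms $(\sqrt{ab})^k/k!$ of the exponential series.

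If this charging argument turns out not to close, the fallback is linear-algebraic. Write $J-I=\sum_x \mathbf a_x\mathbf b_x^{T}$, where $\mathbf a_x$ and $\mathbf b_x$ are the incidence vectors of $x$ in the $A$'s and the $B$'s. Then the matrix $\bigl(\prod_{x\in A_i\cap B_j}(1+t)\bigr)_{i,j}$ equals $J+t(J-I)$ entrywise. Since $J+t(J-I)$ has full rank for suitable $t$, it would remain to find a low-rank decomposition of that product matrix using only the degree bounds $|A_i|\le a$ and $|B_j|\le b$. I would aim for rank at most $\sum_k \binom{a}{k}\binom{b}{k}\,t^k$ after truncation, and optimising $t$ would then give $e^{O(\sqrt{ab})}$.
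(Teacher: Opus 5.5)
Your proposal does not contain a proof: both routes stop exactly where the condition $|A_i\cap B_j|=1$ would have to be used quantitatively.

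\textbf{The restriction scheme.} The one-sided restriction step never uses this condition. If $B_j\cap X=\emptyset$ then $A_i\cap B_j\subseteq A_i\setminus X$ for \emph{any} cross intersecting system, and your base case $m\le b+1$ for $a=1$ also holds verbatim for such systems. So every bound obtained by iterating these two facts also bounds ordinary cross intersecting systems, where Bollob\'as's $\binom{a+b}{a}$ is attained. The scheme can therefore never go below $\binom{a+b}{a}$, while the target $e^{\sqrt{ab}}$ is exponentially smaller than $\binom{2n}{n}$ when $a=b=n$. The two-sided variant, passing to $(A_i\setminus X,B_i\setminus X)$, makes every pair whose unique common point lies in $X$ disjoint. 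The restricted family is then not even cross intersecting, so there is nothing to recurse on. The charging to ``$k\ell$ defect positions'' and the recursion with factor $\sqrt{ab}/k$ are asserted, not derived.

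\textbf{The fallback.} It adds nothing either. Since $|A_i\cap B_j|\le 1$, the product $\prod_{x\in A_i\cap B_j}(1+t)$ is just $1+t|A_i\cap B_j|$. Its expansion is therefore $J+t\sum_x\mathbf{a}_x\mathbf{b}_x^{T}$, a sum of $N+1$ rank-one terms with $N$ unbounded, and there is nothing to truncate. The expression $\sum_k\binom{a}{k}\binom{b}{k}t^k$ cannot be a rank bound. It tends to $1$ as $t\to 0$, while $J+t(J-I)$ has rank $m\ge 2$ for small $t\neq 0$, and at $t=1$ it is $\binom{a+b}{a}$ again. In any case, a target of $e^{O(\sqrt{ab})}$ would not give the stated constant.

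\textbf{What is missing.} The missing idea is how to use the decomposition $J-I=\sum_x M_x$ with $M_x=\mathbf{a}_x\mathbf{b}_x^{T}$, which you did write down. Let $w_x=d_A(x)d_B(x)$ be the number of ones in $M_x$, where $d_A(x)$ and $d_B(x)$ are the degrees of $x$ in the two families. Cauchy--Schwarz with $\sum_x d_A(x)\le ma$ and $\sum_x d_B(x)\le mb$ gives $\sum_x\sqrt{w_x}\le m\sqrt{ab}$.

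For the matching lower bound, order the elements so that $w_1\ge w_2\ge\cdots$. The $M_x$ partition $J-I$ entrywise, and this is the only place the $1$-cross condition is needed. Hence $C_k=J-\sum_{x\le k}M_x$ is a $0$-$1$ matrix with trace $m$, rank at most $k+1$, and $m+\sum_{x>k}w_x$ ones. The rank lemma $(\trace C)^2\le\rank C\cdot w(C)$ then yields $\sum_{x\le k}w_x\le km^2/(k+1)$, so the weight cannot concentrate on few elements. Consequently the sequence $m^2/(\ell(\ell+1))$, $1\le\ell\le m-1$, padded with zeros, majorizes $(w_x)$. Karamata's inequality for the concave function $\sqrt{\cdot}$ gives $\sum_x\sqrt{w_x}\ge m\sum_{\ell=1}^{m-1}(\ell(\ell+1))^{-1/2}\ge m\ln m$. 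Comparing the two estimates yields $\ln m\le\sqrt{ab}$.
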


\begin{cor}

\[
\lim_{n \to \infty} \frac{m(n,n,1)}{\binom{2n}{n} } = 0.
\]
    
\end{cor}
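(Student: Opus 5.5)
The corollary follows directly from Theorem~\ref{main-thm} by comparing an exponential with base $e$ against one with base $4$. The plan is to specialize the theorem to $a=b=n$, bound the central binomial coefficient from below by an elementary estimate, and observe that the ratio decays geometrically.

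\emph{Step 1: apply the theorem.} Fix $n \geq 1$ and let $\{(A_i,B_i)\}_{i=1}^m$ be a $1$-cross intersecting set pair system with $|A_i|,|B_i| \leq n$ and $m = m(n,n,1)$. If $m \geq 2$, Theorem~\ref{main-thm} with $a=b=n$ gives $m \leq e^{\sqrt{n\cdot n}} = e^n$. If $m \leq 1$, then $m \leq e^n$ holds trivially. In either case
\[
m(n,n,1) \leq e^n .
\]

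\emph{Step 2: lower bound the binomial coefficient.} The $2n+1$ binomial coefficients $\binom{2n}{k}$, $0 \leq k \leq 2n$, sum to $4^n$, and the middle one $\binom{2n}{n}$ is the largest. Hence
\[
\binom{2n}{n} \geq \frac{4^n}{2n+1}.
\]

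\emph{Step 3: combine.} Putting the two bounds together,
\[
0 \leq \frac{m(n,n,1)}{\binom{2n}{n}} \leq (2n+1)\left(\frac{e}{4}\right)^n .
\]
Since $e < 4$, the right-hand side is a polynomial times a geometric sequence with ratio less than $1$, so it tends to $0$ as $n \to \infty$. This proves the corollary, and in fact shows that the ratio decays exponentially.

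There is no real obstacle here. The only point to check is the hypothesis $m \geq 2$ in Theorem~\ref{main-thm}, which is handled by the trivial case in Step 1. All of the difficulty lies in the theorem itself.
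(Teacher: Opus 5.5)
Your proposal is correct and is essentially the paper's own (implicit) argument: specialize Theorem~\ref{main-thm} to $a=b=n$ to get $m(n,n,1)\le e^n$, then compare with $\binom{2n}{n}\ge 4^n/(2n+1)$, so the ratio is at most $(2n+1)(e/4)^n\to 0$. Your treatment of the $m\ge 2$ hypothesis is fine, and in fact that case never arises, since $m(n,n,1)\ge 2$ for every $n\ge 1$.
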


\medskip

\noindent {\bf AI Use.}
Several key ideas of this paper were developed through a sequence of prompts to Anthropic's Claude (Opus 4.8, Opus 5, Sonnet 5) and OpenAI's ChatGPT (5.2 Thinking, 5.6 Sol). Initial attempts to further improve on the relative bound of~$\frac{5}{6}$, for large~$n$ (obtained in conjunction with ChatGPT 5.6 Sol), were simplified into the existing linear algebraic argument.
The presentation in this article is that of the authors.

\section{Proof}

The proof of Theorem~$\ref{main-thm}$ is a short linear algebraic argument, requiring only two standard inequalities, which we recall now.

For non-increasing real sequences of length~$n$, $x=(x_1,\dots, x_n)$ and $y=(y_1,\dots, y_n)$, we say that $x$ \emph{majorizes} $y$ if the following hold:
\begin{enumerate}[label=(\roman*)]
    \item\label{cond-i} both sequences have the same sum, i.e.,  $\sum_{i=1}^{n} x_i = \sum_{i=1}^{n} y_i$, and
    
    \item\label{cond-ii} the partial sums satisfy $\sum_{i=1}^{k} x_i \geq \sum_{i=1}^{k} y_i$ for all $1 \leq k \leq n-1$.
\end{enumerate}

The following lemma states that concave functions respect majorization. 

\begin{lemma}[Karamata's inequality~\cite{Karamata1932}]\label{karamata}
    Suppose $x=(x_1,\dots, x_n)$ and $y=(y_1,\dots, y_n)$ are non-increasing real sequences such that $x$ majorizes $y$.
    If $f$ is a real-valued concave function, then
    \[
    \sum_{i=1}^n f(x_i) \leq \sum_{i=1}^n f(y_i).
    \]
\end{lemma}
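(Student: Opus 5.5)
The plan is the standard Abel-summation proof: write each difference $f(y_i)-f(x_i)$ as a chord slope times $(y_i-x_i)$, then sum by parts so that the majorization hypotheses \ref{cond-i} and \ref{cond-ii} appear through the partial sums. Write $X_k=\sum_{i\le k}x_i$ and $Y_k=\sum_{i\le k}y_i$, so that $X_k\ge Y_k$ for $k\le n-1$ and $X_n=Y_n$.

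\emph{Step 1 (slopes).} For each $i$ define
\[
c_i=\frac{f(y_i)-f(x_i)}{y_i-x_i}\quad\text{if } x_i\neq y_i .
\]
If $x_i=y_i$, define $c_i$ to be a one-sided derivative of $f$ at $x_i$, say $f'_+(x_i)$. In every case $f(y_i)-f(x_i)=c_i(y_i-x_i)$.

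\emph{Step 2 (monotonicity).} The key claim is that $c_1\le c_2\le\dots\le c_n$. This follows from the three-chord property of concave functions: the chord slope $s(u,v)=\frac{f(v)-f(u)}{v-u}$ is non-increasing in each of $u$ and $v$. Extending $s$ to the diagonal by $s(u,u)=f'_+(u)$ keeps this monotonicity. Indeed, for $u<v$ we have $s(u,v)\le f'_+(u)$, and for $w<u$ we have $s(w,u)\ge f'_-(u)\ge f'_+(u)$. Since both $x_i$ and $y_i$ are non-increasing in $i$, the slopes $c_i=s(x_i,y_i)$ are non-decreasing in $i$.

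\emph{Step 3 (Abel summation).} Using $y_i-x_i=(Y_i-X_i)-(Y_{i-1}-X_{i-1})$, summation by parts gives
\[
\sum_{i=1}^n\bigl(f(y_i)-f(x_i)\bigr)=\sum_{i=1}^n c_i(y_i-x_i)=\sum_{k=1}^{n-1}(c_k-c_{k+1})(Y_k-X_k)+c_n(Y_n-X_n).
\]
The last term vanishes by \ref{cond-i}. Each remaining term is a product of two non-positive factors: $c_k-c_{k+1}\le 0$ by Step 2 and $Y_k-X_k\le 0$ by \ref{cond-ii}. Hence the sum is non-negative, which is exactly $\sum_i f(x_i)\le\sum_i f(y_i)$.

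The main obstacle is the degenerate case $x_i=y_i$ in Steps 1 and 2, where the one-sided derivative must be shown to fit into the monotone chain of slopes. If $f$ is only concave on an interval and $x_i$ sits at an endpoint, the one-sided derivative may be infinite. In that case I would note that the term $c_i(y_i-x_i)$ is zero anyway, so $c_i$ may be replaced by any finite value between the neighbouring slopes. Alternatively, one can perturb the sequences slightly and use the continuity of $f$ on the interior of its domain.
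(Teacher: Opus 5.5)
The paper does not prove this lemma; it quotes it from Karamata's 1932 paper as a standard tool, so there is no argument of the authors' to compare against. Your proof is the standard Abel-summation argument, and it is correct. The chord slopes $c_i=s(x_i,y_i)$ are non-decreasing because both sequences are non-increasing and $s$ is non-increasing in each argument. Summation by parts then turns $\sum_i \bigl(f(y_i)-f(x_i)\bigr)$ into $\sum_{k<n}(c_k-c_{k+1})(Y_k-X_k)$, a sum of products of two non-positive factors. What your write-up adds over the citation is self-containedness.

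The one place to tighten is the degenerate case, and it does arise here. The paper applies the lemma to $f=\sqrt{\cdot}$ on $[0,\infty)$, where $f'_+(0)=+\infty$. There $w^*_\ell=0$ for $\ell\ge m$, while $w_\ell=d_A(\ell)d_B(\ell)$ can also be $0$, so $x_i=y_i=0$ is possible. At a right endpoint, $f'_+$ is not even defined.

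You can avoid one-sided derivatives altogether:
\begin{enumerate}[label=(\alph*)]
\item Define $c_i$ by chord slopes only for indices with $x_i\neq y_i$.
\item These slopes are non-decreasing by the four-point chord inequality: $s(u,v)\le s(u',v')$ whenever $u\ge u'$, $v\ge v'$, $u\ne v$ and $u'\ne v'$. To prove it, pass through whichever of $(u',v)$ and $(u,v')$ is off the diagonal. Both cannot lie on it, since that would force $u'=v\ge v'=u\ge u'$, i.e.\ $u=v$. Here $s(\cdot,v)$ is non-increasing on the domain minus $\{v\}$, even across $v$, by the three-chord property.
\item Give each index with $x_i=y_i$ the slope of the preceding non-degenerate index, or of the first non-degenerate index for an initial block.
\end{enumerate}
Those terms contribute $c_i(y_i-x_i)=0$, so your identity and the Abel summation go through unchanged. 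This version needs no continuity or differentiability of $f$ at endpoints, which matters because a concave function on a closed interval may jump at an endpoint.
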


The second tool we require is the so-called ``Rank Lemma'' which bounds the trace of a matrix in terms of its rank and Frobenius norm. In particular, for an $n \times n$ matrix $C = [c_{ij}]$, 
\[
(\trace C)^2 \leq \rank C \cdot \left(\sum_{i=1}^n \sum_{j=1}^n c_{ij}^2 \right).      
\]
See, for example, Lemma 41 of~\cite{KS} for a statement and references to combinatorial and geometric applications. We require a corollary specialized to zero-one matrices (matrices where all entries are either $0$ or $1$).
For a zero-one matrix $C$, let the \textit{weight} of $C$, denoted $w(C)$, be the number of $1$-entries in $C$. In such a matrix, every term $c_{ij}^2$ is zero or one and so we have:

\begin{lemma}[Rank Lemma]\label{rank-lemma} For a zero-one square matrix $C$,
    \[
    (\trace C)^2 \leq \rank C \cdot w(C).
    \]    
\end{lemma}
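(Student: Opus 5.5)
The plan is to prove the general Rank Lemma, namely $(\trace C)^2 \leq \rank C \cdot \sum_{i,j} c_{ij}^2$ for any real square matrix $C$. The zero-one statement of Lemma~\ref{rank-lemma} then follows at once. When every entry satisfies $c_{ij}\in\{0,1\}$ we have $c_{ij}^2 = c_{ij}$, so $\sum_{i,j} c_{ij}^2$ is exactly the number of $1$-entries, that is, $w(C)$.

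For the general inequality I will use the singular value decomposition rather than eigenvalues, because eigenvalues of a non-symmetric real matrix may be complex. Let $r = \rank C$. Write
\[
C = \sum_{k=1}^{r} s_k u_k v_k^{T},
\]
where $s_1,\dots,s_r>0$ are the nonzero singular values, and $\{u_k\}$ and $\{v_k\}$ are orthonormal families of vectors in $\mathbb{R}^n$. Since the trace is linear and $\trace(u v^T) = v^T u$, this gives
\[
\trace C = \sum_{k=1}^r s_k \, v_k^T u_k .
\]
By Cauchy--Schwarz, $|v_k^T u_k| \leq \|v_k\|\,\|u_k\| = 1$ for each $k$. Hence $|\trace C| \leq \sum_{k=1}^r s_k$.

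Applying Cauchy--Schwarz to the $r$-term sum, with the all-ones vector, gives
\[
\Bigl(\sum_{k=1}^r s_k\Bigr)^2 \leq r \sum_{k=1}^r s_k^2 .
\]
Finally, I use the standard identity $\sum_k s_k^2 = \trace(C^T C) = \sum_{i,j} c_{ij}^2$, the squared Frobenius norm. This identity holds because $C^TC = \sum_k s_k^2 v_k v_k^T$ and $\trace(v_k v_k^T) = \|v_k\|^2 = 1$. Chaining the three bounds yields $(\trace C)^2 \leq r \sum_{i,j} c_{ij}^2$, as required.

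There is no real obstacle here. The only point needing care is to count exactly $r = \rank C$ terms in the decomposition, and to avoid using eigenvalues unless one first justifies bounding them by singular values (for instance via Schur's inequality). The singular value route handles both concerns cleanly.
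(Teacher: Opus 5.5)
Your proof is correct. The paper does not prove this lemma beyond your final reduction. It quotes the general inequality $(\trace C)^2 \le \rank C \cdot \sum_{i,j} c_{ij}^2$ from the literature (Lemma~41 of its reference KS). It then notes that each $c_{ij}^2$ is $0$ or $1$ for a zero-one matrix, so the Frobenius sum equals $w(C)$.

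The one difference is that you also prove the cited general inequality, and your singular value argument does this correctly. The steps $\trace(u_kv_k^T)=v_k^Tu_k$, $|v_k^Tu_k|\le 1$, Cauchy--Schwarz over exactly $r=\rank C$ nonzero singular values, and $\sum_k s_k^2=\trace(C^TC)=\sum_{i,j}c_{ij}^2$ are all valid, and the case $C=0$ is trivial. Working with singular values rather than eigenvalues is also a sound way to handle non-symmetric $C$.

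If you want an even shorter self-contained route, let $P$ be the orthogonal projection onto the column space of $C$ and $\|\cdot\|_F$ the Frobenius norm. Then $C=PC$. Cauchy--Schwarz gives $|\trace C|=\bigl|\sum_{i,j}P_{ij}c_{ji}\bigr|\le \|P\|_F\,\|C\|_F$. Since $\|P\|_F^2=\trace P=r$, this yields the general inequality directly, with no SVD needed.
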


With these two tools in hand, we are now ready to prove the main result.

\begin{proof}[Proof of Theorem~\ref{main-thm}]
Fix $m\geq 2$ and let $\{(A_i,B_i)\}_{i=1}^m$ be a $1$-cross intersecting set pair system with $|A_i| \leq a$ and $|B_i| \leq b$ for every $1 \leq i \leq m$. Let 
\[
\bigcup_{i=1}^m (A_i \cup B_i)
\]
be the \emph{ground set} of the system, which, by relabeling, we identify with~$[N] = \{1,2,\dots, N\}$. For each $\ell \in [N]$, define incidence column vectors $\bm{a}_\ell,\bm{b}_\ell \in \{0,1\}^{m}$ by
\[
\bm{a}_\ell = (\mathbf{1}_{\ell \in A_i})_{i=1}^m \qquad \text{and} \qquad \bm{b}_\ell = (\mathbf{1}_{\ell \in B_i})_{i=1}^m.
\]

We will denote by $d_A(\ell)$ and $d_B(\ell)$ the number of 1-entries in $\bm{a}_\ell$ and $\bm{b}_\ell$, respectively; these correspond to the \emph{degree} of~$\ell$ in the individual set systems $\{A_i\}_{i=1}^m$ and~$\{B_i\}_{i=1}^m$. Let $M_\ell$ be the $m \times m$ matrix resulting from the outer product $\bm{a}_\ell \bm{b}_\ell^\top$. Note that $M_\ell$ is a zero-one matrix, with weight $w(M_\ell)= d_A(\ell)d_B(\ell)$ which we abbreviate as $w_\ell$. Observe that $w_\ell$ counts the number of pairs $(A_i,B_j)$ such that $A_i \cap B_j = \{\ell\}$.

We will bound~$\sum_{\ell \in [N]}\sqrt{w_\ell}$ from above and below in order to conclude an upper bound on~$m$.

The upper bound follows by Cauchy-Schwarz and the hypothesis that $|A_i|\leq a$ and $|B_i|\leq b$ for all $i \in [m]$;
\begin{align}\label{UB}
\sum_{\ell\in [N]}\sqrt{w_\ell}=\sum_{\ell\in [N]}\sqrt{d_A(\ell)d_B(\ell)}\leq \sqrt{\left(\sum_{\ell\in [N]}d_A(\ell)\right)\left(\sum_{\ell\in [N]}d_B(\ell)\right)}\leq \sqrt{(ma)(mb)}=m\sqrt{ab}.
\end{align}

To obtain a lower bound, we will need to consider the way the intersections~$A_i \cap B_j$ can be spread across the ground set~$[N]$.
Let $J$ and $I$ be the $m\times m$ all-ones and identity matrices. The key step, and only place in the proof where we use the $1$-cross intersecting property, is the following lemma.

\begin{lemma}\label{matrix-decomp}
    The matrices $M_\ell$ form an entrywise partition of the matrix $J-I$. In particular,
    \begin{equation}\label{matrix-sum}
    \sum_{\ell\in [N]} M_\ell = J-I.   
    \end{equation}
    Consequently,
    \begin{equation}\label{total-ones}
    \sum_{\ell \in [N]} w_\ell = m(m-1).    
    \end{equation}

\end{lemma}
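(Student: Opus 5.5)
The plan is to compute the $(i,j)$ entry of $\sum_{\ell\in[N]} M_\ell$ directly and compare it with the $(i,j)$ entry of $J-I$. By definition $M_\ell = \bm{a}_\ell \bm{b}_\ell^\top$, so its $(i,j)$ entry is $\mathbf{1}_{\ell\in A_i}\mathbf{1}_{\ell\in B_j}$. This equals $1$ exactly when $\ell \in A_i\cap B_j$. Summing over $\ell\in[N]$ therefore gives
\[
\Bigl(\sum_{\ell\in[N]} M_\ell\Bigr)_{ij} = |A_i\cap B_j|,
\]
where we use that $[N]$ is the whole ground set, so every element of $A_i\cap B_j$ is counted.

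Next we apply the $1$-cross intersecting hypothesis. It gives $|A_i\cap B_i|=0$ on the diagonal and $|A_i\cap B_j|=1$ off the diagonal, which is precisely the $(i,j)$ entry of $J-I$. This proves \eqref{matrix-sum}.

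The ``entrywise partition'' claim follows from the same computation. Each $M_\ell$ is zero-one, and every entry of the sum is at most $1$. So for each off-diagonal position $(i,j)$ exactly one $M_\ell$ has a $1$ there, namely the one with $\ell$ the unique element of $A_i\cap B_j$. On the diagonal, no $M_\ell$ has a $1$. Hence the supports of the $M_\ell$ are pairwise disjoint and their union is the support of $J-I$.

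Finally, for \eqref{total-ones} we sum all entries on both sides of \eqref{matrix-sum}. The total entry sum of a zero-one matrix is its weight, so the left side gives $\sum_\ell w(M_\ell)=\sum_\ell w_\ell$. The right side gives $m^2-m$, the number of ones in $J-I$.

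No step here is a real obstacle. The only point needing care is noting that the disjointness of supports comes from the entries being nonnegative integers that sum to at most $1$.
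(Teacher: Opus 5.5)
Your proposal is correct and follows the paper's proof: identify the $(i,j)$ entry of $\sum_{\ell} M_\ell$ with $|A_i\cap B_j|$, then apply the $1$-cross intersecting property. You also spell out the disjointness of the supports and the entry-sum computation giving \eqref{total-ones}, both of which the paper leaves implicit.
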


\begin{proof}
    By the definition of the outer product  $\bm{a}_\ell \bm{b}_\ell^{\top}$, 
    the $(i,j)$ entry of $M_\ell$ is $1$ if $\ell \in A_i \cap B_j$ and $0$ otherwise. Hence the $(i,j)$ entry of $\sum_{\ell\in [N]}M_\ell$ counts the number of elements in $A_i\cap B_j$. By the $1$-cross intersecting property, we know that $|A_i \cap B_j|=1$ when $i\neq j$ and $|A_i \cap B_j|=0$ when $i=j$.
\end{proof}

Hence the total weight,~$m(m-1)$, is distributed across the~$N$ matrices~$M_\ell$.
By relabeling elements if necessary, we may assume that the sequence of weights~$w_1 \geq \dots \geq w_N$ is non-increasing.
We are interested in finding a majorizing sequence for the~$w_\ell$, and to do so we first examine partial sums of the matrices~$M_\ell$. For $1\leq k \leq N$, define
\[
C_k = J-\sum_{\ell=1}^kM_\ell.
\]
The diagonal entries of each~$M_\ell$ are~$0$, so $\trace C_k = m$.
By substitution into \eqref{matrix-sum}, we see that $w(C_k)=m+\sum _{\ell=k+1}^Nw_\ell$.
The matrices $J$ and $M_\ell=\bm{a}_\ell\bm{b}_\ell^\top$ all have rank at most~$1$, so by subadditivity of rank
\[
\rank C_k
\leq \rank J + \sum_{\ell=1}^k\rank M_\ell \leq k+1.
\]
By Lemma~\ref{matrix-decomp},~$C_k$ is a zero-one matrix and so we may apply the Rank Lemma (Lemma~\ref{rank-lemma}) to obtain
    \[
    m^2 \leq (k+1)\left(m+\sum_{\ell=k+1}^N w_\ell\right).
    \]
       Rearranging terms and substituting \eqref{total-ones} yields
    \begin{equation}\label{partial-sum-q}
\sum_{\ell=1}^k w_\ell\leq \frac{km^2}{k+1}.
\end{equation}

With this bound in hand, we can readily find a sequence $w^*=(w_1^*, \dots, w_N^*)$ that majorizes $w=(w_1, \dots, w_N)$. 
Taking $k=N$ in \eqref{partial-sum-q} and using \eqref{total-ones}, we obtain
\[
m(m-1) \leq \frac{Nm^2}{N+1}
\]
and hence $N \geq m-1$. 
We may now define
\[
w_\ell^* = \frac{m^2}{\ell(\ell+1)} \quad(1\le \ell\le m-1),
  \qquad
  w_\ell^*=0\quad(m\le \ell\le N).
\]
One can check that the~$w_\ell^*$ are non-increasing and that
\[
\sum_{\ell=1}^k w_\ell^* =  \frac{km^2}{k+1} \quad(1\leq k\leq m-1), \qquad \sum_{\ell=1}^kw_\ell^*=\sum_{\ell=1}^{m-1}w_\ell^*=m(m-1) \quad(m \leq k \leq N), 
\]
and, in particular, $\sum_{\ell=1}^Nw_\ell^*=m(m-1)$.  Therefore, comparing with \eqref{partial-sum-q} and \eqref{total-ones}, we see that $w^*$ majorizes $w$.
Since the square-root function is concave, Karamata's inequality (Lemma~\ref{karamata}) implies
 \begin{equation}\label{q-lower-bound}
   \sum_{\ell=1}^N\sqrt{w_\ell} \geq \sum_{\ell=1}^N \sqrt{w_\ell^*} = m\sum_{\ell=1}^{m-1}\frac{1}{\sqrt{\ell(\ell+1)}}. 
 \end{equation}

Combining upper bound \eqref{UB} and lower bound \eqref{q-lower-bound}  gives
\begin{equation}\label{sum-ineq}
  \sum_{\ell=1}^{m-1}\frac{1}{\sqrt{\ell(\ell+1)}}  \leq \sqrt{ab}.  
\end{equation} 

From here it only remains to extract an upper bound for $m$, but we must be slightly delicate with calculus.
Recall that
\[
\ln \left(\frac{\ell+1}{\ell}\right) = \int_{\ell}^{\ell+1} \frac{1}{z}\, \mathrm{d}z
\]
by definition. By Cauchy-Schwarz 
\[
\left(\int_{\ell}^{\ell+1} \frac{1}{z}\, \mathrm{d}z\right)^2 \leq \left(\int_{\ell}^{\ell+1} 1 \, \mathrm{d}z\right)\left(\int_{\ell}^{\ell+1} \frac{1}{z^2}\, \mathrm{d}z\right) = 1 \left(\frac{1}{\ell} - \frac{1}{\ell+1}\right) = \frac{1}{\ell(\ell+1)}.
\]
Therefore,
\[
 \ln \left(\frac{\ell+1}{\ell}\right) \leq \frac{1}{\sqrt{\ell(\ell+1)}},
\]
and summing the above for $1 \leq \ell \leq m-1$ telescopes into
\[
\ln m \leq \sum_{\ell=1}^{m-1} \frac{1}{\sqrt{\ell(\ell+1)}}.
\]
Together with \eqref{sum-ineq}, this gives $\ln m \leq \sqrt{ab}$, i.e., $m \leq e^{\sqrt{ab}}$.
\end{proof}

\section{Concluding remarks}

It remains to further describe the asymptotic behavior of $m(n,n,1)$. In \cite{FGK2023}, the authors show via a product construction that $m(a_1+a_2,b_1+b_2,1) \geq m(a_1,b_1,1)  m(a_2,b_2,1)$. In particular, $\ln m(n,n,1)$ is superadditive and so Fekete's lemma implies that $\lim_{n \to \infty} m(n,n,1)^{1/n}$ exists.
In light of the lower bound from \cite{FGK2023} and Theorem~\ref{main-thm}, the situation is thus:
\[
\sqrt{5} \leq \lim_{n \to \infty} m(n,n,1)^{1/n} \leq e.
\]
There appears to be some slack in the proof of Theorem~\ref{main-thm}, particularly in the construction of the $w^*_\ell$ values and their use in Karamata's inequality. Thus it may be interesting to find a further exponential improvement: 

\begin{prob}
Is there an $\varepsilon >0$ such that $\lim_{n \to \infty} m(n,n,1)^{1/n} \leq e-\varepsilon$? 
\end{prob}

One route to such an improvement is to understand how the cross-intersections are distributed over the ground set.
For each ground-set element~$\ell$, the weight $w_\ell$ records exactly the number of pairs $(i,j)$ for which $A_i \cap B_j = \{\ell\}$. The proof of Theorem~\ref{main-thm} implies that extremal $1$-cross intersecting set pair systems must have highly skewed weights $w_\ell$. Such asymmetry appears in the product constructions given in \cite{FGK2023} of size $5^{n/2}$. Indeed, there are five ground set elements which are each contained in approximately $16\%$ of all the $(A_i,B_j)$ pairs, while approximately $80\%$ of the ground elements are each contained in only four pairs.

 We specialize to the case where $a=b=n$ for the following observations, but they hold for all $a$ and $b$.

\begin{obs}
 Let $m\geq 2$ and $\{(A_i,B_i)\}_{i=1}^m$ be a $1$-cross intersecting family such that $|A_i|,|B_i| \leq n$. There  exists a ground-set element $\ell$ satisfying $\displaystyle w_\ell \geq \frac{(m-1)^2}{n^2}$.   
\end{obs}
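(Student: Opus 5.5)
We want some $\ell$ with $w_\ell \geq (m-1)^2/n^2$, so the plan is to compare the total weight with the sum of square roots. By Lemma~\ref{matrix-decomp}, equation \eqref{total-ones}, we know $\sum_{\ell} w_\ell = m(m-1)$. By the Cauchy--Schwarz bound \eqref{UB} with $a=b=n$, we have $\sum_\ell \sqrt{w_\ell} \leq mn$. Set $W=\max_\ell w_\ell$. For each $\ell$,
\[
w_\ell = \sqrt{w_\ell}\cdot\sqrt{w_\ell} \leq \sqrt{W}\,\sqrt{w_\ell}.
\]
Summing over $\ell\in[N]$ gives
\[
m(m-1) = \sum_\ell w_\ell \leq \sqrt{W}\sum_\ell \sqrt{w_\ell} \leq \sqrt{W}\, mn.
\]
Hence $\sqrt{W}\geq (m-1)/n$, that is, $W \geq (m-1)^2/n^2$, and any $\ell$ attaining the maximum works.

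The only points to check are these:
\begin{itemize}
\item Inequality \eqref{UB} uses only the size bounds $|A_i|\le a$ and $|B_i|\le b$, and \eqref{total-ones} uses the $1$-cross intersecting property; both were established in the proof of Theorem~\ref{main-thm} without any ordering assumption.
\item We have $m\ge2$, so the total weight is positive and $W>0$. This makes the division legitimate.
\end{itemize}
For general $a,b$, the same argument gives $w_\ell \ge (m-1)^2/(ab)$.

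There is no real obstacle. The key step is simply recognizing that bounding $\sum w_\ell$ by $\max\sqrt{w_\ell}\cdot\sum\sqrt{w_\ell}$ converts the upper bound \eqref{UB} on the square-root sum into a lower bound on the maximum weight.
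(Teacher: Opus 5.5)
Your proposal is correct and is essentially the paper's own proof: you bound $\sum_\ell w_\ell \le \bigl(\max_\ell \sqrt{w_\ell}\bigr)\sum_\ell \sqrt{w_\ell}$, then combine \eqref{total-ones} with \eqref{UB} and rearrange. One tiny remark: the quantity you actually divide by is $mn$, not $W$, and $n\ge 1$ holds automatically because $m\ge 2$ forces $|A_1\cap B_2|=1$.
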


\begin{proof}
    Let $[N]$ be the ground set of $\{(A_i,B_i)\}_{i=1}^m$.
    Using \eqref{total-ones} and \eqref{UB}, 
    \[
    m(m-1)=\sum_{\ell\in [N]}w_\ell\leq \left(\max_{\ell\in [N]}\sqrt{w_\ell}\right) \sum_{\ell\in [N]}\sqrt{w_\ell}\leq \left(\max_{\ell\in [N]}\sqrt{w_\ell}\right)mn.
    \]
    Rearranging yields $\max_{\ell\in [N]}\sqrt{w_\ell}\geq \frac{m-1}{n}$. 
\end{proof}

In contrast, Proposition 1.3 of~\cite{FGK2023} shows that $m\leq N$, so the average of $w_\ell$ is at most $m-1$. 

We can also show the existence of a small set of elements which witness nearly all the intersections~$A_i \cap B_j$.

\begin{obs} Let $m\geq 2$ and $\{(A_i,B_i)\}_{i=1}^m$ be a $1$-cross intersecting family such that $|A_i|,|B_i|\leq n$. For $\varepsilon >0$, there is a set of ground elements $T$ with $|T|\leq {2n^2}/{\varepsilon}$ such that
\[
\sum_{\ell \in T} w_\ell \geq (1-\varepsilon)m(m-1).
\]
\end{obs}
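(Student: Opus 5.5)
The plan is to take $T$ to be the $k$ elements of largest weight, with $k$ of order $n^2/\varepsilon$. Then I bound the leftover weight $\sum_{\ell>k} w_\ell$ using only the upper bound \eqref{UB} on $\sum_\ell \sqrt{w_\ell}$, which here reads $\sum_{\ell\in[N]}\sqrt{w_\ell}\le mn$, together with the total weight identity \eqref{total-ones}. As in the proof of Theorem~\ref{main-thm}, relabel so that $w_1\ge w_2\ge\dots\ge w_N$. Set $k=\lceil 2n^2/\varepsilon\rceil-1$, so that $k\le 2n^2/\varepsilon$ and $k+1\ge 2n^2/\varepsilon$. If $k\ge N$, take $T=[N]$ and the claim is trivial. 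Otherwise let $T=\{1,\dots,k\}$.

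The first key step bounds the largest weight outside $T$. Since the weights are non-increasing,
\[
(k+1)\sqrt{w_{k+1}}\le \sum_{\ell=1}^{k+1}\sqrt{w_\ell}\le \sum_{\ell\in[N]}\sqrt{w_\ell}\le mn ,
\]
so $\sqrt{w_\ell}\le \sqrt{w_{k+1}}\le mn/(k+1)$ for every $\ell>k$.

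The second step bounds the tail by pulling out this maximum, as in the proof of the previous observation:
\[
\sum_{\ell>k} w_\ell\le \Bigl(\max_{\ell>k}\sqrt{w_\ell}\Bigr)\sum_{\ell>k}\sqrt{w_\ell}\le \frac{mn}{k+1}\cdot mn=\frac{m^2n^2}{k+1}.
\]
Using $k+1\ge 2n^2/\varepsilon$, this gives a tail of at most $\varepsilon m^2/2$. Since $m\ge 2$, we have $m\le 2(m-1)$, so $\varepsilon m^2/2\le \varepsilon m(m-1)$. Subtracting from the total weight $m(m-1)$ given by \eqref{total-ones} yields $\sum_{\ell\in T}w_\ell\ge(1-\varepsilon)m(m-1)$.

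There is no real obstacle here. The only point needing care is the step from $m^2$ to $m(m-1)$, which is exactly where the factor $2$ in $2n^2/\varepsilon$ comes from. Choosing $k$ via the ceiling keeps $|T|\le 2n^2/\varepsilon$ while still guaranteeing $k+1\ge 2n^2/\varepsilon$.
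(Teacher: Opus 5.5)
Your proof is correct and is essentially the paper's argument: both bound the weight of the light elements by $\bigl(\max\sqrt{w_\ell}\bigr)\cdot\sum_{\ell}\sqrt{w_\ell}$ and control everything through \eqref{UB} and \eqref{total-ones}, with the factor $2$ coming from $m\le 2(m-1)$. The only difference is cosmetic. You fix $|T|=k$ in advance and take the $k$ heaviest elements, whereas the paper takes $T=\{\ell:\sqrt{w_\ell}\ge\lambda\}$ with $\lambda=\varepsilon m(m-1)/\sum_{\ell}\sqrt{w_\ell}$ and then bounds $|T|$ afterwards.
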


\begin{proof}
Let $[N]$ be the ground set of $\{(A_i,B_i)\}_{i=1}^m$.
    Set
    \[
    \lambda=\frac{\varepsilon m(m-1)}{\sum_{\ell\in [N]}\sqrt{w_\ell}} \qquad \text{and} \qquad T=\{\ell\in [N]\,:\, \sqrt{w_\ell} \geq \lambda\}.
    \]
    If $\ell\in T$, then $\sqrt{w_\ell} \geq \lambda$, so 
    \[
    |T|\lambda \leq \sum_{\ell\in T}\sqrt{w_\ell}\leq \sum_{\ell\in [N]}\sqrt{w_\ell}.
    \]
    Therefore, using \eqref{UB},
    \[
    |T|\leq \frac{\sum_{\ell\in [N]}\sqrt{w_\ell}}{\lambda}=\frac{\left(\sum_{\ell\in [N]}\sqrt{w_\ell}\right)^2}{\varepsilon m (m-1)}\leq \frac{m^2 n^2}{\varepsilon m (m-1)} = \frac{m}{m-1} \frac{n^2}{\varepsilon}\leq \frac{2n^2}{\varepsilon},
    \]
    and thus $|T|$ has the appropriate size.
    
    Now, if $\ell\not\in T$, then $\sqrt{w_\ell} \leq \lambda$, so $w_\ell\leq \lambda \sqrt{w_\ell}$. Summing, we see
    \[
    \sum_{\ell\not\in T}w_\ell\leq \lambda \sum_{\ell\not\in T}\sqrt{w_\ell}\leq \lambda\sum_{\ell\in [N]}\sqrt{w_\ell}=\varepsilon m (m-1).
    \]
    Recalling \eqref{total-ones}, the elements in $T$ must satisfy 
    \[
    \sum_{\ell\in T}w_\ell\geq (1-\varepsilon)m(m-1). \qedhere
    \]
\end{proof}

Theorem~\ref{main-thm} also has implications for the related \emph{biclique cover problem.}
A \emph{biclique cover} of a graph $G$ is a collection of complete bipartite subgraphs (bicliques) of $G$ whose edge sets cover $E(G)$, while a \emph{biclique partition} requires that these edge sets partition $E(G)$. A biclique cover (partition) is \emph{$r$-local} if every vertex is in at most $r$ of its bicliques. The minimum $r$ for which there is an $r$-local biclique cover (partition) of a graph $G$ is denoted $\mathrm{lbc}(G)$ and $\mathrm{lbp}(G)$, respectively.
Since every biclique partition is a biclique cover, $\mathrm{lbc}(G) \leq \mathrm{lbp}(G)$. Pinto~\cite{Pinto2014} investigated how large a gap between these parameters is possible, and  
showed that for every $t \geq 2$, there exists a graph $G$ that admits a $2$-local cover by at most $t$ bicliques such that
\begin{equation}\label{pinto-bound}
    \mathrm{lbp}(G) \geq \frac{1}{2} \log_2 \left(\frac{t-1}{3}\right).
\end{equation}
 Pinto's argument relies on estimating the $r$-local biclique partition number of the 
\emph{crown graph} $H_m$: the complete bipartite graph $K_{m,m}$ with a perfect matching removed. 
In \cite{FGK2023}, the authors show the following equivalence
\[
\textrm{lbp}(H_m) \leq n \qquad \iff \qquad m \leq m(n,n,1).
\]
Therefore, Theorem~\ref{main-thm} implies $\ln m \leq \mathrm{lbp}(H_m)$. Substituting this bound into Pinto's argument improves the leading coefficient in \eqref{pinto-bound} from $1/2$ to $\ln 2$.

\end{document}